\documentclass[leqno,11pt,oneside]{amsart}

 \usepackage[english]{babel}


\setlength{\textheight}{21cm} \setlength{\textwidth}{15cm}
\setlength{\oddsidemargin}{0cm}

\setlength{\parskip}{\smallskipamount}
\setlength{\parindent}{0pt}

\usepackage{xcolor}
\usepackage{graphicx}
\usepackage{relsize}

\usepackage{enumitem}
\usepackage{amsmath,amsfonts,amssymb,amsthm}
\usepackage{soul}

\newcommand{\sing}{{\rm Sing}}

\setlength{\parskip}{\smallskipamount}


\newtheorem*{theorem*}{Theorem}

\newtheorem{teo}{Theorem}[section]
\newtheorem{prop}[teo]{Proposition}
\newtheorem{ddef}[teo]{Definition}

\newtheorem{lem}[teo]{Lemma}
\newtheorem*{cor*}{Corollary}

\newtheorem*{lem*}{Lemma}

\newtheorem*{teorA'}{Theorem A'}

\newtheorem*{fact*}{Fact}

\newtheorem{maintheorem}{Theorem}

\theoremstyle{definition}

\newcommand{\red}{\scriptscriptstyle\textup{red}}

\newcommand{\C}{\mathbb{C}}
 
\newcommand{\R}{\mathbb{R}}

\newcommand{\N}{\mathbb{N}}
\newcommand{\Z}{\mathbb{Z}}

\newcommand{\K}{\mathbb{K}}

\newcommand{\dd}{\textup{d}}

\newcommand{\F}{\mathcal{F}}

\newcommand{\G}{\mathcal{G}}

\newcommand{\im}{{\rm Im}}
\newcommand{\re}{{\rm Re}}

\newcommand{\mb}[1]{\mathbf{#1}}

\newcommand*\xbar[1]{ %
   \hbox{ %
     \vbox{%
       \hrule height 0.3pt 
       \kern0.35ex
       \hbox{%
         \kern-0.1em
         \ensuremath{#1}%
         \kern-0.1em
       }%
     }%
   }%
}

\newcommand*\xxbar[1]{%
   \hbox{%
     \vbox{%
       \hrule height 0.3pt 
       \kern0.4ex
       \hbox{%
         \kern-0.1em
         \ensuremath{#1}%
         \kern-0.1em
       }%
     }%
   }%
}

\makeindex

\begin{document}

\setcounter{section}{0}
\setcounter{teo}{0}
\setcounter{exe}{0}

\title{On the algebraicity of germs of meromorphic functions}

\author{Yohann Genzmer}
 \address{Institut de Math\'ematiques de Toulouse --- Universit\'e Paul Sabatier}
\curraddr{118 Route de Narbonne --- 31400
  --- Toulouse Cedex 9, FRANCE}
\email{yohann.genzmer@math.univ-toulouse.fr }

\author{Rog\'erio Mol}
\address{Departamento de Matem\'atica ---  Universidade Federal de Minas Gerais}
\curraddr{Av. Ant\^onio Carlos 6627 --- 31270-901 --- Belo Horizonte, BRAZIL.}
\email{rmol@ufmg.br}

\subjclass[2010]{32A20, 13B40,  14B12, 13J05, 13J15, 32S65, 34Cxx}
\keywords{Meromorphic function, algebraic function, homomorphic foliation, holomorphic vector field, Artin approximation}
\thanks{The second  author was financed in part by the
Coordena\c c\~ao de Aperfei\c coamento de Pessoal de N\'\i vel Superior - Brasil (CAPES) --- Finance Code 001}
\maketitle

\begin{abstract}
In this article we prove that every germ of analytic meromorphic function at $(\C^{2},0)$ is equivalent,
under the right composition by a germ of biholomorphism, to a germ of algebraic meromorphic function.
An analogous result is also true for real analytic meromorphic functions.
\end{abstract}


 \medskip \medskip

\section{Introduction}


Let $\varphi$ be a germ of holomorphic function at $0 \in \C^{n}$.
A well-known result by J. Mather and S. Yau asserts that
if $\varphi$ has isolated singularity, then it is \emph{polynomially determined}, that is, it is  equivalent, under the
right composition by a  germ of
biholomorphism in ${\rm Diff}(\C^{n},0)$, to a polynomial function \cite{MatherYau1981,MatherYau1982}.
More than that, $\varphi$ is \emph{finitely determined}, this polynomial being the initial part of its Taylor
series.
Using techniques of deformation of singular holomorphic foliations,  in \cite{cerveaumattei1982}, D. Cerveau and J.-F. Mattei address
 the problem of finite determination of germs  $\varphi = \varphi_{1}^{\mu_{1}} \cdots \varphi_{k}^{\mu_{k}}$
of multiform functions,
where $\varphi_{1},\ldots,\varphi_{k}$ are irreducible germs of holomorphic functions at $(\C^{n},0)$
 and $\mu_{1}, \ldots, \mu_{k} \in \C^{*}$. They prove  that
if the critical set $\mathrm{C}'(\varphi)$ of $\varphi$, as defined below, is reduced to at most an isolated point,
then  $\varphi$  is finitely determined.

A softer alternative to finite determination is the notion of \emph{algebraization}. We say that a germ of
meromorphic function at $(\K^{n},0)$, where $\K =\mathbb{R}$ or $\mathbb{C}$, is \emph{algebraizable} if it becomes algebraic after the right composition by
an element of ${\rm Diff}(\K^{2},0)$, the group of germs of
of real or complex analytic diffeomorphism at $0 \in \K^{2}$. A meromorphic function $\varphi$ in the variables $x=(x_{1},\cdots,x_{n})$ is \emph{algebraic} if
there exists a polynomial $P = P(x,t) \in \K[x][t]$ such that $P(x,\varphi(x)) \equiv 0$.
The central result of this article has the following statement:

\begin{maintheorem}
\label{theo-main}
Let $\K  =\mathbb{R}$ or $\mathbb{C}$. Let $\varphi$ be a germ of meromorphic function at $(\K^{2},0)$.
Then  there exists a germ of diffeomorphism $\Psi \in {\rm Diff}(\K^{2},0)$ such that
$\varphi \circ \Psi$ is algebraic.
\end{maintheorem}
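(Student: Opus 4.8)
The plan is to pass from the meromorphic function to the germ of singular holomorphic foliation it defines, to produce an \emph{algebraic} model of that foliation carrying an algebraic first integral, and then to realize the equivalence between $\varphi$ and the model with the help of M. Artin's approximation theorem. First I would set things up over $\K=\C$; the real case will follow because the construction can be made equivariant under complex conjugation, so that a real $\varphi$ yields a real $\Psi$. Write $\varphi = f/g$ with $f,g \in \OO_{\C^{2},0}$ relatively prime. The level curves of $\varphi$, together with $\{f=0\}$ and $\{g=0\}$, form a pencil whose integral curves are the leaves of the germ of singular holomorphic foliation $\F_{\varphi}$ defined by the $1$-form $\omega = g\,df - f\,dg$, equivalently by a germ of holomorphic vector field. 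Replacing $\varphi$ by a primitive meromorphic first integral $\varphi_{*}$ of $\F_{\varphi}$ --- so $\varphi = R\circ\varphi_{*}$ with $R \in \C(t)$ --- reduces the problem to algebraizing $\varphi_{*}$, since then $\varphi\circ\Psi = R\circ(\varphi_{*}\circ\Psi)$ is again algebraic, $R$ being algebraic. So I assume $\varphi$ primitive.

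Next I would take the reduction of singularities $\pi\colon(\widetilde M, D)\to(\C^{2},0)$ of $\F_{\varphi}$ --- an embedded resolution of the pencil $\{sf - tg = 0\}$, which one may arrange so that $\widetilde\varphi := \varphi\circ\pi$ is a holomorphic fibration onto $\Pe^{1}$ --- with normal-crossings exceptional divisor $D$ and only reduced singularities for the transformed foliation. Attached to this picture is a finite ``type'': the weighted dual graph of $D$, the distribution on $D$ of the strict transforms of the members of the pencil and their attaching points, and the local formal models at the reduced singularities. Because $\F_{\varphi}$ possesses a meromorphic first integral, the holonomy of each component of $D$ is finite (Mattei--Moussu), and this rigidity confines the formal --- indeed the analytic --- class of $\varphi$ to the above discrete data together with finitely many moduli of an algebraic nature (cross-ratio-type invariants of the attaching points, local eigenvalue data). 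When the critical set $\mathrm{C}'(\varphi)$ is larger than a point these moduli are genuinely present, so one cannot appeal to finite determinacy as in Cerveau--Mattei; they must be matched by hand.

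Guided by this type I would construct an explicit \emph{polynomial} primitive meromorphic germ $\varphi_{0} = \widetilde f_{0}/\widetilde g_{0}$ realizing the same resolution, the same local models and the same finitely many moduli --- concretely, by assembling local polynomial normal forms along a polynomial model of $D$ and then contracting, or by desingularizing a suitably chosen polynomial pencil. It then remains to find $\Psi\in\diff(\C^{2},0)$ and $T\in\mathrm{PGL}_{2}(\C)$ with
\[
\varphi\circ\Psi \;=\; T\circ\varphi_{0}.
\]
After clearing denominators this is an analytic system of equations in the unknown germ $\Psi$ and the entries of $T$. One first exhibits a formal solution $(\widehat\Psi,\widehat T)$: it exists precisely because $\varphi_{0}$ was built to share with $\varphi$ a complete system of formal invariants, a formal conjugacy being then produced by the standard cohomological procedure along $D$. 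Artin's theorem upgrades $(\widehat\Psi,\widehat T)$ to a convergent solution $(\Psi, T)$; for $\K = \R$ the data and the model are real and the system is conjugation-equivariant, so $\Psi$ may be taken real analytic. Since $\varphi_{0}$ is algebraic over $\K$ and $T\in\mathrm{PGL}_{2}(\K)$, the germ $\varphi\circ\Psi = T\circ\varphi_{0}$ is algebraic, as claimed.

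The main obstacle is the middle step: proving that the analytic class of a primitive meromorphic germ is pinned down by a \emph{finite}, \emph{algebraically parametrized} set of invariants, all of which are realized by a polynomial germ --- this is what makes a formal, hence (by Artin) convergent, conjugacy to an algebraic model available. The finiteness of the holonomy forced by the existence of a first integral is the crucial input here; the delicate bookkeeping is keeping the dual graph, the attaching points, the local formal data and the gluing cocycle simultaneously compatible with the polynomial model $\varphi_{0}$ and, when $\K=\R$, with the conjugation symmetry.
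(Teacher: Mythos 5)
Your reduction to a primitive first integral and your final Artin step are fine in principle, but the heart of your argument --- the claim that the analytic class of a primitive meromorphic germ is determined by the resolution data together with \emph{finitely many, algebraically parametrized} moduli, all simultaneously realizable by a polynomial pencil, and that matching these data yields a \emph{formal} conjugacy ``by the standard cohomological procedure along $D$'' --- is asserted, not proved, and it is precisely where the whole difficulty of the theorem sits. You acknowledge this yourself in your last paragraph, but no argument is offered: finite holonomy of the components of $D$ (which is indeed available in the presence of a meromorphic first integral) does not by itself give a complete finite system of invariants for the pencil germ, nor the vanishing of the $H^{1}$-type obstruction needed to glue local normal forms into a global formal conjugacy; when $\mathrm{C}'(\varphi)$ is one-dimensional there are genuine continuous moduli (this is exactly why Cerveau--Mattei finite determinacy fails here), and no classification result of the strength you invoke exists in the literature. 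Since your concluding application of Artin's approximation presupposes the formal solution $(\widehat\Psi,\widehat T)$ whose existence rests on this unproved classification, the proof as written has a genuine gap rather than a fixable technical omission.

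For comparison, the paper's proof deliberately avoids any classification of pencil germs. It records the one-dimensional components of $\sing(\theta_{\varphi})$ through the divisibility relations $f-c_{i}g=h_{i}^{\ell_{i}+1}s_{i}$ (Proposition \ref{prop-multiplicity}), views the irreducible factors of $f$, $g$ and the $h_{i}$, $s_{i}$ as an analytic solution of the polynomial system \eqref{eq-system}, and applies the \emph{parametrized} Artin--P\l oski--Popescu theorem (Theorem \ref{teo-ploski}): this gives not merely a nearby algebraic solution (which would be useless, since proximity does not imply conjugacy of the foliations) but an algebraic solution in extra variables admitting the original one as an analytic section. Truncating that section produces a one-parameter deformation $\Omega_{R,t}$ joining $\omega_{\varphi}$ to the $1$-form of an algebraic meromorphic function, tangent to $\omega_{\varphi}$ to arbitrarily high order, and Cerveau--Mattei's trivialization lemma for deformations of integrable forms with isolated singularity (Lemma \ref{lem-triviality}, usable because the $h_{i}^{\ell_{i}}$ have been divided out) supplies the conjugating diffeomorphism; a short first-integral lemma then converts conjugacy of foliations into algebraicity of $\varphi\circ\Psi$. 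If you want to salvage your approach, you would have to supply the complete-invariants statement and the polynomial realization; the parametrized approximation theorem is what lets the paper bypass both.
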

We remark that this result is false in higher dimension: in \cite{Bochnak}, J. Bochnak and W. Kucharz provide  an example of a non-algebraizable holomorphic function in dimension three.

The proof of Theorem \ref{theo-main} has two key ingredients. First,    a sophisticated generalization of the Artin-Ploski-Popescu Approximation Theorem established in \cite{bilski2017}   by M. Bilski, A. Parunsinski and G. Rond. Second, Cerveau-Mattei's finite determination techniques, based upon the trivialization of
holomorphic foliations.

Let us give an outline on how these pieces  fit together in our proof.
Cerveau-Mattei's approach  \cite{cerveaumattei1982} relies on   regarding the holomorphic foliation defined by the levels of a multiform function $\varphi$  as above. The \emph{strict critical set} $\mathrm{C}'(\varphi)$ is then defined as the   singular set of the holomorphic $1$-form
\[ \theta_{\varphi} = \varphi_{1} \ldots \varphi_{k} \sum_{j=1}^{k} \mu_{j} \frac{\dd\varphi_{j}}{\varphi_{j}} ,\]
 excluding those points of the hypersurface
$\varphi_{1} \cdots \varphi_{k}=0$ where it   has normal crossings.
In dimension two, in general, the critical set $\mathrm{C}'(\varphi)$ coincides with the singular locus of  $\theta_{\varphi}$.
In the holomorphic case --- i.e. $\mu_{i} \in \mathbb{N}^{*}$ for $i=1,\ldots,k$ ---  the set  $\mathrm{C}'(\varphi)$ is at most a point,
and therefore, applying Cerveau-Mattei's result, $\varphi$ is of finite determination.
On the other hand, when $\mu_{i} \in \Z^{*}$ for $i=1,\ldots,k$, not all with the same sign, we are
 in the \emph{purely meromorphic} case,
  $\mathrm{C}'(\varphi)$ may be  one-dimensional and the result of \cite{cerveaumattei1982} no longer applies.

When  $\sing(\theta_{\varphi})$ has one dimensional components,  which must be contained in level sets of $\varphi$,
we can regard them, along with the irreducible equations of zeros and poles of $\varphi$,  as solutions of polynomial equations. The Artin-P\l oski-Popescu Approximation Theorem (\cite{bilski2017}, see Theorem \ref{teo-ploski} below) then provides algebraic solutions for the same set
of polynomial equations,  but  depending  on some additional variables. This allows us to make a deformation of
the equations of the irreducible components of $\sing(\theta_{\varphi})$, along with those of the zero and pole sets of $\varphi$, into
algebraic equations,
as tangent to the identity as wished.
Finally, Theorem \ref{theo-main} is obtained from a result in \cite{cerveaumattei1982} on the
triviality of deformations of holomorphic foliations.
The above steps, to be detailed in Section \ref{sec-main-theorem}, provide a  proof for Theorem \ref{theo-main}
in the complex case. The  real analytic case, to be treated in Section \ref{section-realanalytic}, will be proved through complexification,  after using   the fact that
the  Approximation Theorem also holds for real analytic functions.

This article can also be regarded under the perspective  of the \emph{algebraization} of germs of analytic differential
equations in two variables. In a more classical and strict sense, a germ of holomorphic foliation is \emph{algebraizable}
if there are local coordinates where it is engendered by a vector field with polynomial
coefficients, which allows to see it as the germification at a point
of a global foliation defined in the complex projective plane $\mathbb{P}^{2}$.
Whether or not all germs of holomorphic foliations are algebraizable was question dating back to the early years of the theory
of holomorphic foliation
\cite{dulac1909} that received a negative answer
in \cite{GenzmerTeyssier2010}, where the authors proved the existence of countably many non-algebraizable saddle-node equations ---
i.e. corresponding to a vector field with non-nilpotent linear part having one eigenvalue equal to zero.
In a  more fluid formulation,
a germ of holomorphic foliation $\F$ at $(\C^{2},0)$ is \emph{algebraizable}
if there exists an algebraic surface $S$ endowed with a singular holomorphic foliation $\G$
such that $\F$ is, under
a germ of biholomorphic identification of $(\C^{2},0)$ and $(S,p)$, for some $p \in S$,  the germ of $\G$ at
$p$. The articles \cite{Casale2013} and \cite{CalsamigliaSad2015}
study the algebraization of foliations
given by the level sets of  meromorphic
functions, under the condition that they become regular after a single blow-up.
The algebraization of a meromorphic function, as provided in the conclusion of   Theorem \ref{theo-main}, is  in some sense  the algebraization of the foliation defined by
its levels, as we will describe in the next section.

\par \noindent \textbf{Acknowledgments.}
The second author is grateful for the hospitality during his visit to the Institut de Ma\-th\'e\-ma\-ti\-ques
de Toulouse/Universit\'e Paul Sabatier  in the winter 2022/23, when this work was developed.
Both authors express  gratitude to Guillaume Rond for having drawn their attention to the Artin-Ploski-Popescu Approximation Theorem \cite{bilski2017}.

\section{Algebraic functions, holomorphic foliations}

Let us consider variables $x = (x_{1},\ldots,x_{n}) \in \C^{n}$.
Denote by $\C\{x\}$ the ring of convergent power series at $0 \in \C^{n}$  whose elements will  be called \emph{analytic functions}.

\begin{ddef}
An element $f \in  \C \{x\}$ is \emph{algebraic} if there is a polynomial $P \in \C[x][t]$, where $t \in \C$, such that $P(f) = 0$. Algebraic analytic functions form a  ring that will be denoted by $\C\langle x \rangle$.
\end{ddef}

In line with the definition above, we say that a germ of meromorphic function $\varphi$ at $(\C^{n},0)$ is \emph{algebraic}
if there exists a polynomial $P = P(x,t) \in \C[x][t]$, where $t \in \C$, such that $P(\varphi) = 0$.
This is equivalent to asking that $\varphi$ is the quotient $f/g$ for some $f,g \in \C\langle x \rangle$. In other words,
the field of algebraic meromorphic functions is the fraction field of the ring $\C\langle x \rangle$.
The polynomial $P \in \C[x][t]$ can be chosen to be minimal in degree and with   coefficients  relatively prime, in which case it will be called \emph{minimal}.

Our results are developed under the scope of the local theory of holomorphic foliations.
A germ of \emph{singular holomorphic foliation} of codimension one at $(\C^{n},0)$, $n \geq 2$, is the object $\F$
defined by a germ of holomorphic $1$-form $\omega$ at $(\C^{n},0)$, satisfying $\omega \wedge d \omega = 0$
(the integrability condition), with singular set $\sing(\omega)$ of codimension at least two.
We will refer to this object simply as \emph{foliation}.
The definition means that, in a neighborhood of $0 \in \C^{n}$ where the germ $\omega$ is realized, it defines
a regular holomorphic foliation outside $\sing(\omega)$.
A particular case, which is considered in this article,
is when $\F$ is defined by a closed meromorphic $1$-form $\lambda$.
The corresponding holomorphic $\omega$, the one that defines the foliation, is
obtained by multiplying $\lambda$ by an appropriated meromorphic function that cancels its zeros and its poles.
A simple and standard calculation shows  that the codimension one analytic hypersurfaces defined by the poles and zeros of $\lambda$
are invariant by $\F$.
As a fundamental example, to be treated in this article,
the levels of the multiform function
$\varphi = \varphi_{1}^{\mu_{1}} \cdots \varphi_{k}^{\mu_{k}}$, as in the Introduction,
define a foliation $\F$ given by the closed meromorphic $1$-form of \emph{logarithmic type}
$\lambda_{\varphi} = \frac{\dd \varphi}{\varphi} =  \sum_{j=1}^{k} \mu_{i} \frac{\dd\varphi_{j}}{\varphi_{j}}$.
The analytic hypersurfaces $\varphi_{j} =0$, $j=1,\ldots,k$,   the irreducible components of the polar set of $\lambda_{\varphi}$, are invariant by $\F$.
Cancelling the poles, we obtain a holomorphic $1$-form
$\theta_{\varphi} = \varphi_{1} \ldots \varphi_{k} \lambda_{\varphi}$.  The codimension one components of
$\sing(\theta_{\varphi})$ are also invariant by $\F$ and, clearly, are not zeros of $\varphi_{1} \cdots \varphi_{k}$.  We finally obtain the holomorphic $1$-form
 $\omega_{\varphi}$, that defines $\F$, by cancelling the common irreducible factors of the coefficients of $\theta_{\varphi}$.
These    elementary facts provide the first move towards  the proof of Theorem \ref{theo-main}, where
$\varphi$ is meromorphic in dimension $n=2$.

As we pointed out in the Introduction, the algebraization of a meromorphic function is
somehow linked to the notion of algebraization of a  foliation. Let us explain the
meaning of this.
Let $\varphi$ be a purely meromorphic algebraic function with minimal polynomial $P \in \C[x][t]$ and
denote by $\F$ the local holomorphic foliation
defined by its levels.
If  $\varphi$ is rational,   $\F$  is evidently the
germification of a holomorphic  foliation in $\mathbb{P}^{n}$.
Let us then assume that $\varphi$ is    non-rational, which is equivalent to asking
that $\deg(P) > 1$, where the degree is in the variable $t$.
In $\C^{n} \times \C  \cong \C^{n+1}$, consider the irreducible algebraic hypersurface $M$ of equation $P(x,t)= 0$.
Since $\varphi$ is purely meromorphic, for every $c \in \C$ the level hypersurface $\varphi = c$ contains the origin,
and thus $P(0,c) = 0$, following that $P(0,t) \equiv 0$ and thus the line $L$ of equation $x=0$ is contained in $M$.
Since $M$ is irreducible of codimension one, $L$ is accumulated by points  $M \setminus L$.
We can consider $(x,t)$ as affine coordinates of $\mathbb{P}^{n}$ and the corresponding
compactification of $M$.  The point at infinity of $L$ will be identified with $c = \infty$.
$M$ is endowed with a singular foliation $\G$
defined by the levels of the rational   function on $M$ given    by $R(x,t)=t$.
On the other hand, we can embed a small punctured
neighborhood $U^{*}$ of $0 \in \C^{2}$ in a neighborhood of $L$ in $M$, minus $L$ itself, by the correspondence
$x \in U^{*} \mapsto (x,\varphi(x)) \in M$. This map sends $\F$ into $\G$, meaning that the
levels of $\varphi$ are sent into the levels of $R$.
In other words, under this identification, $\phi$ is the rational function $R$.

 \section{Proof of Theorem \ref{theo-main}}
\label{sec-main-theorem}

In this section we provide a proof for Theorem \ref{theo-main} for $\K = \C$.
Our result involves approximating analytic functions which are solution of a system of polynomial equations by algebraic ones.
Our main tool is  Artin-P\l oski-Popescu Approximation Theorem
\cite[Th.2.1]{bilski2017}:

\begin{teo}
\label{teo-ploski}
Suppose that a system of polynomial equations in the variables $y \in \C^{m}$,
$$\mb{f} =  (f_{1},\ldots,f_{\gamma}) \in \C\langle x\rangle[y]^{\gamma},$$
where $x \in \R^{n}$ and $\gamma \in \mathbb{N}$,
has a   vector of analytic functions $\mb{y}(x)  \in \C\{x\}^{m}$ as solution, that is
\[\mb{f}(x,\mb{y}(x)) = 0.\]
Then there are variables $z = (z_{1},...,z_{\sigma}) \in \C^{\sigma}$,
a vector of algebraic functions $\mb{\widehat{y}}(x,z) \in \C \langle x,z \rangle^{m}$
and a vector of analytic functions
$\mb{z}(x) \in \C\{x\}^{\sigma}$, with $\mb{z}(0) = 0$,
such that
\[  \mb{f}(x,\mb{\widehat{y}}(x,z)) = 0  \ \ \ \text{and} \ \ \  \mb{y}(x) = \mb{\widehat{y}}(x,\mb{z}(x)) .\]
\end{teo}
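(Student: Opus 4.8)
The plan is to factor the given analytic solution through a \emph{smooth} algebra of finite type over $\C\langle x\rangle$ and to exploit that such an algebra is, Zariski-locally, \'etale over an affine space over $\C\langle x\rangle$, hence embeds into a ring of algebraic power series in $x$ together with finitely many auxiliary variables. This presents the statement as a common refinement of Artin's approximation theorem, P\l oski's parametrized version, and Popescu's general N\'eron desingularization, and I would deduce it from the last.

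First I would form the finite type $\C\langle x\rangle$-algebra $A=\C\langle x\rangle[y_{1},\ldots,y_{m}]/(f_{1},\ldots,f_{\gamma})$; the hypothesis $\mb{f}(x,\mb{y}(x))=0$ says precisely that $y_{j}\mapsto y_{j}(x)$ defines a $\C\langle x\rangle$-algebra homomorphism $\phi\colon A\to\C\{x\}$. The inclusion $\C\langle x\rangle\hookrightarrow\C\{x\}$ is a regular morphism of Noetherian rings (faithfully flat with geometrically regular fibres), so by the General N\'eron Desingularization Theorem $\C\{x\}$ is a filtered colimit of smooth $\C\langle x\rangle$-algebras of finite type. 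As $A$ is of finite type, $\phi$ factors as $A\xrightarrow{\psi}D\xrightarrow{\rho}\C\{x\}$ with $D$ smooth of finite type over $\C\langle x\rangle$.

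Next I would analyze $D$ near the point $\mathfrak{n}=\rho^{-1}(\mathfrak{m})$, where $\mathfrak{m}$ denotes the maximal ideal of $\C\{x\}$. By the local structure of smooth morphisms, after replacing $D$ by a localization $D_{h}$ with $h\notin\mathfrak{n}$ --- which leaves the factorization of $\rho$ intact, since $\rho(h)$ is then a unit --- one may assume that $D$ is \'etale over an affine space $\C\langle x\rangle[z_{1},\ldots,z_{\sigma}]$, and, translating the $z_{i}$, that each $z_{i}(x):=\rho(z_{i})\in\C\{x\}$ vanishes at $0$. Then $D_{\mathfrak{n}}$ is an essentially \'etale local extension, with residue field $\C$, of $\C\langle x\rangle[z]$ localized at the origin, hence admits a canonical $\C\langle x\rangle[z]$-algebra homomorphism $\iota\colon D\to\C\langle x,z\rangle$ into the ring of algebraic power series in $(x,z)$, the latter being the henselization of $\C\langle x\rangle[z]$ at the origin. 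Since $\C\{x\}$ is a Henselian local ring, the uniqueness of such extensions forces $\rho$ to coincide with the composition of $\iota$ and the substitution $z=\mb{z}(x)$, where $\mb{z}(x)=(z_{1}(x),\ldots,z_{\sigma}(x))\in\C\{x\}^{\sigma}$ with $\mb{z}(0)=0$.

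Finally, set $\mb{\widehat{y}}(x,z):=\iota(\psi(y))\in\C\langle x,z\rangle^{m}$. The relations $\mb{f}(x,\psi(y))=0$ hold already in $A$, hence in $D$, hence --- applying $\iota$ --- in $\C\langle x,z\rangle$, giving $\mb{f}(x,\mb{\widehat{y}}(x,z))=0$; and $\mb{\widehat{y}}(x,\mb{z}(x))=\rho(\psi(y))=\phi(y)=\mb{y}(x)$, as required. The real analytic case follows by the same argument, since $\R\langle x\rangle\hookrightarrow\R\{x\}$ is likewise regular. I expect the crux to be the middle step: N\'eron--Popescu desingularization is what upgrades the merely $\mathfrak{m}$-adically approximate solutions supplied by Artin's theorem to an \emph{exact} factorization through a smooth algebra, and it is there that the auxiliary variables $z$ arise; confining the resulting algebra to $\C\langle x,z\rangle$ rather than merely to $\C\{x,z\}$ is precisely what compels the detour through the \'etale-local structure of smooth morphisms instead of a softer analytic parametrization in the spirit of P\l oski's theorem.
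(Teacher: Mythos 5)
The paper itself contains no proof of this statement: it is quoted verbatim as Theorem~2.1 of Bilski--Parusi\'nski--Rond \cite{bilski2017}, so the only ``proof'' here is that citation. Your argument --- factor the analytic solution through a smooth finite-type $\C\langle x\rangle$-algebra via General N\'eron (Popescu) desingularization, which applies because $\C\langle x\rangle\hookrightarrow\C\{x\}$ is a regular morphism, then use the local structure of smooth morphisms to present that algebra as \'etale over $\C\langle x\rangle[z]$, map it into $\C\langle x,z\rangle$ viewed as the henselization of $\C\langle x\rangle[z]$ at the origin, and recover $\rho$ as the specialization $z=\mb{z}(x)$ by the rigidity of \'etale algebras over a local ring (Henselianness giving existence, unramifiedness giving uniqueness) --- is correct and is essentially the proof given in the cited reference, so there is nothing to add beyond noting that the heavy lifting is indeed Popescu's theorem, exactly as you identify.
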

In other words, if a system of polynomial equations    has an analytic solution, then it has an
algebraic solution if extra variables  are allowed and, furthermore, the
original analytic solution is recovered as an analytic section of this new algebraic solution.
We should remark that the above result is also true for the field of real numbers an this will be used, in Section \ref{section-realanalytic},
in order to obtain the real analytic version of Theorem \ref{theo-main}.
The above  result is a version of Artin's Approximation Theorem,
which in its original  statement says that a system of polynomial equations
has analytic \cite{Artin1968} or algebraic \cite{Artin1969}  solutions provided it has formal solutions.
To give a better picture,  it is an algebraic version of P\l oski's Approximation Theorem \cite{Ploski1974}, which
is a generalization of Artin's theorem with parameters.
A comprehensive presentation  of the subject of approximation theorems is provided in \cite{Rond2018}.

Let $f,g\in \C\{x\}$ be without common factors and consider the germ
of meromorphic function $\varphi = f/g$. We  write their decompositions in irreducible factors
$f = f_{1}^{n_{1}} \cdots f_{\alpha}^{n_{\alpha}}$ and $g = g_{1}^{m_{1}} \cdots g_{\beta}^{m_{\beta}}$,
where $n_{i},m_{i} \in \N$,
determined up to multiplication by unities in  $\C\{x\}$. 
We start by associating with $\varphi$  its  logarithmic derivative:
\begin{equation}
\label{eq-etaH}
\lambda_{\varphi} = \frac{\dd \varphi}{\varphi} = \frac{\dd f}{f} - \frac{\dd g}{g} =
 \sum_{i=1}^{\alpha} n_{i} \frac{\dd f_{i}}{f_{i}} -  \sum_{i=1}^{\beta} m_{i} \frac{\dd g_{i}}{g_{i}}.
\end{equation}
Its set of poles has reduced equation $f^{\red} g^{\red} =0$, where
$f^{\red} = f_{1}  \cdots f_{\alpha} $ and $g^{\red} = g_{1}  \cdots g_{\beta}$.
Next, we associate a holomorphic $1$-form  by cancelling  the poles of $\lambda_{\varphi}$ in the most economical way:
\begin{equation}
\label{eq-thetaH}
 \theta_{\varphi} = f^{\red}g^{\red} \lambda_{\varphi} =  g^{\red} \sum_{i=1}^{\alpha} n_{i} f_{1} \cdots \widehat{f_{i}} \cdots f_{\alpha} \dd f_{i}
- f^{\red}  \sum_{i=1}^{\beta} m_{i} g_{1} \cdots \widehat{g_{i}} \cdots g_{\beta} \dd g_{i}.
\end{equation}
Evidently $\theta_{\varphi}$ defines the same germ of  holomorphic foliation as that given by the levels of $\varphi$.
 Note that we have
\begin{equation}
\label{eq-etaH1}
\theta_{\varphi} =   \frac{f^{\red} g^{\red}}{fg} (g \dd f - f \dd g).
 \end{equation}
The \emph{singular set} of $\theta_{\varphi}$, denoted $\sing(\theta_{\varphi})$,
 may contain components of codimension one. However, it is clear from \eqref{eq-thetaH} that none of these components are in
the set of zeros or  poles of $\varphi$. Indeed, otherwise we would find that some $f_{i}$ divides $\dd f_{i}$  or some $g_{i}$ divides $\dd g_{i}$,
which is impossible.
If $h \in \C\{x\}$ is an irreducible equation of a
codimension one
component of $\sing(\theta_{\varphi})$, we say that its \emph{multiplicity} is $\ell$ if $h^{\ell}$ divides $\theta_{\varphi}$ but $h^{\ell+1}$ does not.
In this case,
\[ \theta_{\varphi} =  f^{\red}g^{\red} \lambda_{\varphi} = h^{\ell}  \theta_{\varphi}^{0},\]
where $\theta_{\varphi}^{0}$ is a holomorphic $1$-form that does not contain $h=0$ in its singular set.
By differentiating and manipulating this expression, we find
\[ \dd( f^{\red}g^{\red} ) \wedge h \theta_{\varphi}^{0}= f^{\red}g^{\red}(\ell  \dd h \wedge \theta_{\varphi}^{0} + h \dd \theta_{\varphi}^{0}), \]
which implies that $\dd h \wedge \theta_{\varphi}^{0} = 0$ over $h=0$. This means that the component $h=0$ of $\sing(\theta_{\varphi})$ is invariant by the foliation
induced by ${\theta_{\varphi}}$.
 Thus, it is contained in a level set of $\varphi$.
Based on these remarks, we can state
the following result:

\begin{prop}
\label{prop-multiplicity}
Within  the above context and notation, $h \in \C\{x\}$ is the equation of an irreducible
component of $\sing(\theta_{\varphi})$ of multiplicity $\ell$ if and only if there exists $c \in \C^{*}$ such that
$h$ has multiplicity $\ell +1$ as a factor of $f -cg$.
\end{prop}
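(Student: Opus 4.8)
The plan is to reduce the whole statement to a single computation of orders of vanishing along the irreducible curve $\{h=0\}$. For a nonzero convergent power series or a meromorphic $1$-form $\eta$, write $\ord_{h}(\eta)$ for the largest integer $m$ such that $h^{m}\mid\eta$. First I would fix the set-up common to both implications. If $\{h=0\}$ is a codimension-one component of $\sing(\theta_{\varphi})$, then, as established just above the Proposition, $h$ divides neither $f$ nor $g$ and $\{h=0\}$ is contained in a level set $\{\varphi=c\}$; since $h\nmid fg$, the value $c$ lies in $\C^{*}$, and $\{h=0\}\subseteq\{\varphi=c\}$ reads $h\mid f-cg$. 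Conversely, if $c\in\C^{*}$ and $h\mid f-cg$, then $h$ divides neither $f$ nor $g$, for otherwise it would divide both (using $c\ne 0$ and $\gcd(f,g)=1$). Either way one is reduced to the data: $h$ irreducible, $c\in\C^{*}$, $h\nmid f$, $h\nmid g$, $h\mid f-cg$, and it suffices to prove the identity
\[\ord_{h}(\theta_{\varphi})=\ord_{h}(f-cg)-1 .\]

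For the computation I would start from $\theta_{\varphi}=f^{\red}g^{\red}\lambda_{\varphi}$ with $\lambda_{\varphi}=\dd\varphi/\varphi=\dd(\varphi-c)/\varphi$. Set $k=\ord_{h}(f-cg)\ge 1$; since $h\nmid g$ one has $\ord_{h}(\varphi-c)=k$, so $\varphi-c=h^{k}u$ for some meromorphic $u$ with $\ord_{h}(u)=0$. Differentiating, $\dd(\varphi-c)=h^{k-1}\bigl(k\,u\,\dd h+h\,\dd u\bigr)$, and the bracketed form is not divisible by $h$: modulo $h$ it reduces to $k\,u\,\dd h$, which is nonzero because $k\ne 0$, because $u$ does not vanish identically on $\{h=0\}$, and because $\dd h$ is not divisible by $h$. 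Hence $\ord_{h}(\dd\varphi)=k-1$; since $h$ divides neither $f$ nor $g$ we also have $\ord_{h}(\varphi)=0$ and $\ord_{h}(f^{\red})=\ord_{h}(g^{\red})=0$ (an irreducible factor of $f^{\red}$ or $g^{\red}$ would divide $f$ or $g$), whence
\[\ord_{h}(\theta_{\varphi})=\ord_{h}(f^{\red})+\ord_{h}(g^{\red})+\ord_{h}(\dd\varphi)-\ord_{h}(\varphi)=k-1 ,\]
which is the desired identity. In other words $h$ is a component of $\sing(\theta_{\varphi})$ of multiplicity $\ell$ (necessarily $\ell\ge 1$) exactly when $h$ is a factor of $f-cg$ of multiplicity $\ell+1$; one also observes that this $c$ is uniquely determined by $h$, once more because $\gcd(f,g)=1$.

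The one point that is not pure bookkeeping — and hence the main obstacle — is the fact used above that an irreducible (hence reduced) $h\in\C\{x\}$ does not divide $\dd h$, equivalently that the partial derivatives of $h$ do not all vanish identically on $\{h=0\}$. I would settle this by Weierstrass preparation: after a generic linear change of coordinates $h$ becomes, up to a unit, a Weierstrass polynomial of degree $d\ge 1$ in the last variable $x_{n}$, and the derivative of that polynomial with respect to $x_{n}$ is nonzero (characteristic zero) and of $x_{n}$-degree $d-1<d$, hence not divisible by it; alternatively one may invoke that the singular locus of a reduced hypersurface germ is a proper analytic subset. Everything else in the argument is additivity of $\ord_{h}$ under products and quotients, combined with the description of the one-dimensional part of $\sing(\theta_{\varphi})$ that precedes the Proposition.
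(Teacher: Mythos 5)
Your argument is correct, and its core is the same computation as the paper's: write $f-cg=h^{k}\cdot(\text{something prime to }h)$, differentiate $\varphi-c$, and count orders along the component, arriving at $\ord_{h}(\theta_{\varphi})=\ord_{h}(f-cg)-1$. The difference is where the count is carried out. The paper localizes at a generic point $p$ of $\{h=0\}$, chosen where the curve is smooth and the foliation regular, takes coordinates in which the component is $\{x_{n}=0\}$, and compares powers of $x_{n}$ on both sides of $\theta_{\varphi}=\frac{f^{\red}g^{\red}}{fg}\,g^{2}\,\dd(f/g)$; the reducedness of $h$ is then absorbed into the choice of $p$. You instead work at the origin with the $h$-adic valuation (the localization $\C\{x\}_{(h)}$ is a discrete valuation ring since $h$ is prime), which buys you a proof that never leaves the germ at $0$ and avoids the implicit transfer of the multiplicity $\ell$ from the germ at $0$ to the germ at $p$; the price is the separate lemma that an irreducible $h$ does not divide $\dd h$, which you correctly supply via Weierstrass preparation (or generic smoothness of a reduced hypersurface). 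Your reduction also gives both implications in one stroke and records the uniqueness of $c$, whereas the paper's proof is phrased for the direction in which $h$ is assumed to be a component. Two points worth tightening, though neither is a gap: the cleanest justification that $k\,u\,\dd h+h\,\dd u$ has $h$-order zero is the valuation inequality $\ord_{h}(h\,\dd u)\geq 1$ (since $h\nmid g$ gives $\ord_{h}(\dd u)\geq 0$) against $\ord_{h}(k\,u\,\dd h)=0$, rather than an informal reduction of meromorphic data modulo $h$; and the equivalence should be read with $\ell\geq 1$, as you yourself note.
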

\begin{proof}
As in the statement, denote by  $\ell$   the multiplicity of  $h$ as a component of $\sing(\theta_{\varphi})$.
Let $k$ be its multiplicity as a factor of $f -cg$, where
$c$ corresponds to the level of $\varphi$ containing $h=0$.
Take some point $p$ of $\varphi=0$,
sufficiently near $0 \in \C^{n}$, where $\varphi=0$ is smooth and the foliation $\F$ given by its levels  is regular.
Center new analytic coordinates $x = (x_{1},\ldots,x_{n})$ at $p$,   where
$h=0$ corresponds to $x_{n}=0$.
We can write $\theta_{\varphi} =  x_{n}^{\ell} \tau \dd x_{n}$, where $\tau \in \C\{x\}$ does not have $x_{n}$ as a factor.
On the other hand, at $p$, $f - cg = x_{n}^{k}\upsilon$,
where $x_{n}$ is not a factor of  $\upsilon \in \C\{x\}$. Differentiating $f/g - c = x_{n}^{k} \nu /g$, we have
\[ \theta_{\varphi} =   \frac{f^{\red} g^{\red}}{fg}{g^{2}} \dd \left( \frac{f}{g} \right)
=  \frac{f^{\red} g^{\red}}{fg}{g^{2}} \left(  k x_{n}^{k-1} \frac{\nu}{g} \dd x_{k} + x_{n}^{k} \dd \left( \frac{\nu}{g} \right) \right). \]
We are at the level $c \in \C^{*}$ of $\varphi$, so the factor outside brackets is holomorphic and non-zero at $p$.
Calculating the multiplicity of  $x_{n}$ in both sides of this equation, we   find   that  $k -1 = \ell$, or equivalently   $k= \ell + 1$, which proves the proposition.
\end{proof}

We keep working with the following objects: a meromorphic function $\varphi = f/g$, its associated holomorphic $1$-form $\theta_{\varphi}$,
having $h=0$, where $h \in \C\{x\}$,  as an irreducible   component of $\sing(\theta_{\varphi})$ of multiplicity $\ell$.
By Proposition \ref{prop-multiplicity},  there is $c \in \C$
and $s \in \C\{x\}$ not divisible by $h$ such that $f - cg = h^{\ell+1} s$.
Then, the vector of analytic functions
\[( \mathbf{f}(x) , \mathbf{g}(x),h(x),s(x)) = (f_{1}(x),\ldots, f_{\alpha}(x), g_{1}(x), \ldots, g_{\beta}(x),h(x),s(x))\]
is a solution for the polynomial equation in the variables
$u = (u_{1},\ldots,u_{\alpha}) \in \C^{\alpha}$, $v = (v_{1},\ldots,v_{\beta}) \in \C^{\beta}$ and $y,w \in \C$ given by
\[ u_{1}^{n_{1}} \cdots u_{\alpha}^{n_{\alpha}} - c v_{1}^{m_{1}} \cdots v_{\beta}^{m_{\beta}} = y^{\ell +1} w .\]
Now take $h_{1},\ldots,h_{\gamma}$ the equations of all irreducible components of $\sing(\theta_{\varphi})$.
For $i=1,\ldots,\gamma$,  denote by $\ell_{i}$ the multiplicity of $h_{i}=0$ in
 $\sing(\theta_{\varphi})$ and by $c_{i} \in \C$ its corresponding level in $\varphi$. We have thus a  vector  of analytic solutions $( \mathbf{f}(x) , \mathbf{g}(x),\mathbf{h}(x),\mathbf{s}(x))$ given by
 \[\begin{array}{l}
\mathbf{f}(x) = (f_{1}(x),\ldots, f_{\alpha}(x)) \smallskip \\
\mathbf{g}(x) = (g_{1}(x), \ldots, g_{\beta}(x) \smallskip  \\
\mathbf{h}(x) = (h_{1}(x), \cdots, h_{\gamma}(x)) \smallskip   \\
\mathbf{s}(x) = (s_{1}(x), \cdots, s_{\gamma}(x))
\end{array}\]
 for the system of $\gamma$ polynomial equations
 \begin{equation}
\label{eq-system}
\begin{cases}
u_{1}^{n_{1}} \cdots u_{\alpha}^{n_{\alpha}} - c_{1} v_{1}^{m_{1}} \cdots v_{\beta}^{m_{\beta}} = y_{1}^{\ell_{1} +1} w_{1} \medskip  \\
\ \ \ \ \ \ \ \ \ \ \cdots \medskip   \\
u_{1}^{n_{1}} \cdots u_{\alpha}^{n_{\alpha}} - c_{\gamma} v_{1}^{m_{1}} \cdots v_{\beta}^{m_{\beta}} = y_{\gamma}^{\ell_{\gamma} +1} w_{\gamma}
\end{cases}
\end{equation}
 in the variables $(u,v,y,w)$, where
$y  = (y_{1},\ldots,y_{\gamma}), w  = (w_{1},\ldots,w_{\gamma}) \in \C^{\gamma}$.

By applying  Theorem \ref{teo-ploski} to the system of equations \eqref{eq-system},
there are variables $z = (z_{1},...,z_{\sigma}) \in \C^{\sigma}$ and a vector of algebraic solutions
$(\mathbf{\widehat{f}}(x,z),\mathbf{\widehat{g}}(x,z),\mathbf{\widehat{h}}(x,z),\mathbf{\widehat{s}}(x,z) )$,
where
\smallskip
\begin{equation}
\label{eq-ploski-solution}
\begin{array}{l}
\mathbf{\widehat{f}}(x,z) = (f_{1}(x,z),\ldots, f_{\alpha}(x,z)) \smallskip \\
\mathbf{\widehat{g}}(x,z) = (g_{1}(x,z), \ldots, g_{\beta}(x,z)) \smallskip  \\
\mathbf{\widehat{h}}(x,z) = (h_{1}(x,z), \cdots, h_{\gamma}(x,z)) \smallskip   \\
\mathbf{\widehat{s}}(x,z) = (s_{1}(x,z), \cdots, s_{\gamma}(x,z))
\end{array}
\end{equation}
\smallskip
have all their entries in $\C\langle x,z \rangle$,  and a vector of analytic functions $\mathbf{z}(x) = (z_{1}(x),\ldots,z_{\sigma}(x))$ such
that $\mathbf{z}(0) = 0$ and
\[ (\mathbf{f}(x), \mathbf{g}(x), \mathbf{h}(x), \mathbf{s}(x)) =
 (\mathbf{\widehat{f}}(x,\mathbf{z}(x)), \mathbf{\widehat{g}}(x,\mathbf{z}(x)), \mathbf{\widehat{h}}(x,\mathbf{z}(x)),
 \mathbf{\widehat{s}}(x,\mathbf{z}(x)) )  . \]

We use the following result:
\begin{prop}
\label{prop-ideal} Let $\mathcal{I} = (\phi_{1}, \phi_{2})  \subset \C\{x\}$ be an ideal whose variety of zeros $V(\mathcal{I})$ has codimension
 two. Suppose that $\phi_{1}$ and $\phi_{2}$ are regular with respect to the variable $x_{n}$ with orders $n_{1}$ and $n_{2}$.
Then there exist $\delta, \varepsilon > 0$ such that whenever
$\phi_{1}', \phi_{2}' \in \C\{x\}$ are functions regular with respect to $x_{n}$, with orders $n_{1}$ and $n_{2}$,
satisfying  $|\phi_{1}' - \phi_{1}| < \varepsilon$ and  $|\phi_{2}' - \phi_{2}| < \varepsilon$ on the polydisc $\Delta(0,\delta) \subset \C^{n}$,
then
$V(\mathcal{I'})$ has codimension  two, where $\mathcal{I'} = (\phi_{1}',\phi_{2}')$.
\end{prop}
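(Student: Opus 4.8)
The plan is to translate the codimension‑two condition into the non‑vanishing of a resultant, and then to prove an effective, perturbation‑stable version of that non‑vanishing. Since $\C\{x\}$ is factorial and its height‑one primes are principal, and $\phi_1,\phi_2$ are non‑units, each $V(\phi_i)$ is pure of codimension one, and $V(\mathcal I)=V(\phi_1)\cap V(\phi_2)$ has codimension two if and only if $\phi_1,\phi_2$ have no common irreducible factor. Writing $x=(x',x_n)$ and using Weierstrass preparation, $x_n$‑regularity of order $n_i$ gives $\phi_i=u_iP_i$ with $u_i$ a unit and $P_i=x_n^{n_i}+a_{i,1}(x')x_n^{n_i-1}+\cdots+a_{i,n_i}(x')\in\C\{x'\}[x_n]$ a Weierstrass polynomial; factorisations of $P_i$ coincide in $\C\{x\}$ and in $\C\{x'\}[x_n]$, so $\phi_1,\phi_2$ are coprime iff $P_1,P_2$ are coprime in $\C\{x'\}[x_n]$, and (by Gauss' lemma, both being monic) iff $R:=\mathrm{Res}_{x_n}(P_1,P_2)\in\C\{x'\}$ is a nonzero germ. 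Hence $V(\mathcal I)$ has codimension two $\iff R\not\equiv 0$, and likewise $V(\mathcal I')$ has codimension two $\iff R'\not\equiv 0$, where $R'=\mathrm{Res}_{x_n}(P_1',P_2')$ and $P_i'$ is the Weierstrass polynomial of $\phi_i'$.

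Next I fix the geometry. By $x_n$‑regularity, choose $\delta>0$ inside the common domain of convergence and $0<r,\rho<\delta$ so small that $\phi_i(0,x_n)\neq 0$ for $0<|x_n|\le r$, that $\phi_i$ has no zero on the torus $T=\{|x'|\le\rho\}\times\{|x_n|=r\}$ (so $m:=\min_T(|\phi_1|,|\phi_2|)>0$), and that $\phi_i(x',\cdot)$ has exactly $n_i$ zeros in $\{|x_n|<r\}$ for every $|x'|\le\rho$. Let $\varepsilon<m/2$, to be shrunk. If $\phi_i'$ is $x_n$‑regular of order $n_i$ with $|\phi_i'-\phi_i|<\varepsilon$ on $\Delta(0,\delta)$, then $\phi_i'$ does not vanish on $T$; and since the contour integral $\tfrac{1}{2\pi\ic}\oint_{|x_n|=r}\partial_{x_n}\phi_i'/\phi_i'\,\dd x_n$ counting its zeros in $\{|x_n|<r\}$ depends continuously on $\phi_i'$ (Cauchy estimates plus $|\phi_i'|\ge m/2$ on $T$) and is integer‑valued, it equals $n_i$ for $\varepsilon$ small. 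Then the Weierstrass division formula applies with contour $|\xi|=r$: $P_i'=x_n^{n_i}-\widetilde r_i'$ with $\widetilde r_i'=x_n^{n_i}-q_i'\phi_i'$ and $q_i'(x',x_n)=\tfrac{1}{2\pi\ic}\oint_{|\xi|=r}\frac{\xi^{n_i}\,\dd\xi}{\phi_i'(x',\xi)(\xi-x_n)}$. Since $|\phi_i'|,|\phi_i|\ge m/2$ on $T$, the integrals defining $q_i'$ and $q_i$ differ by $O(\varepsilon)$ uniformly on $\{|x'|\le\rho,\ |x_n|\le r/2\}$; hence so do $\widetilde r_i'$ and $\widetilde r_i$, and, these being polynomials of $x_n$‑degree $<n_i$, comparison of coefficients yields $\sup_{|x'|\le\rho}|a_{i,k}'-a_{i,k}|\le C_1\varepsilon$.

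Finally, $R'(x')=\mathrm{Res}_{x_n}(P_1',P_2')$ is a fixed polynomial (a Sylvester determinant) in the coefficients $a_{1,k}',a_{2,k}'$, which are uniformly bounded on $\{|x'|\le\rho\}$ (within $C_1\varepsilon$ of the bounded $a_{i,k}$); as polynomials are Lipschitz on bounded sets, $\sup_{|x'|\le\rho}|R'-R|\le C\varepsilon$. Here is the decisive point: by hypothesis $V(\mathcal I)$ has codimension two, so $R\not\equiv 0$, and thus $M:=\sup_{|x'|\le\rho}|R|>0$ is a fixed positive constant. Choosing $\varepsilon<M/C$ gives $\sup_{|x'|\le\rho}|R'-R|<M=\sup_{|x'|\le\rho}|R|$, which forces $R'\not\equiv 0$: were $R'$ identically zero on $\{|x'|<\rho\}$ we would get $\sup_{|x'|\le\rho}|R'-R|=\sup_{|x'|\le\rho}|R|=M$, a contradiction. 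Therefore $P_1',P_2'$, hence $\phi_1',\phi_2'$, are coprime, and $V(\mathcal I')$ has codimension two. The $\delta$ above and $\varepsilon:=\min(m/2,M/C)$ work.

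The step I expect to be the main obstacle is the uniform bound $\sup_{|x'|\le\rho}|a_{i,k}'-a_{i,k}|=O(\varepsilon)$: one must be sure that a single polydisc $\{|x'|\le\rho\}\times\{|x_n|<r\}$ serves every admissible perturbation $\phi_i'$ and that the Weierstrass data depend quantitatively on $\phi_i'$, which is exactly where the hypothesis that $\phi_i'$ is again $x_n$‑regular of order $n_i$ — not merely $C^0$‑close to $\phi_i$ — is indispensable, since it licenses both the argument‑principle count and the division formula on that fixed polydisc. By contrast, the closing observation — that $C^0$‑smallness of $R'-R$ measured against the sup norm of $R$, rather than in absolute terms, is what prevents $R'$ from vanishing identically — is elementary, but it is the conceptual reason the statement holds: a fixed nonzero holomorphic function cannot be uniformly approximated by $0$.
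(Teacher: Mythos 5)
Your proof is correct and follows essentially the same route as the paper: reduce to Weierstrass polynomials, note that codimension two is equivalent to the nonvanishing of the resultant $\mathrm{Res}_{x_n}(P_1,P_2)$, and conclude via the Sylvester-determinant dependence on coefficients together with continuity of the Weierstrass data under perturbation. The only difference is that you make quantitative (argument principle, division-formula estimates) what the paper merely cites as the continuous dependence of Weierstrass coefficients, so no further comment is needed.
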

\begin{proof}   Suppose first that
$\phi_{1}, \phi_{2}  \in \C\{x'\}[x_{n}] $ are Weierstrass polynomials of degrees $n_{1}$ and $n_{2}$, where $x' =(x_{1},\ldots,x_{n-1})$.
 The hypothesis on $V(\mathcal{I})$ means that $\phi_{1}$ and $\phi_{2}$ have no common non-trivial factors.
Thus, their resultant $R(\phi_{1}, \phi_{2}) \in \C\{x'\}$ is non-zero. On the other hand,  $R(\phi_{1}, \phi_{2})$ is the determinant of
the Sylvester matrix of $\phi_{1}, \phi_{2}  \in \C\{x'\}[x_{n}] $,   whose  entries are zeros and the coefficients of these polynomials.
Thus, a small perturbation of these coefficients   will preserve the fact that the resultant is non-zero.
The proposition follows from Weiestrass preparation theorem,  noticing that, for a fixed degree, the coefficients of the Weiestrass polynomial of
$\phi \in \mathcal{O}_{n}$ depend continuously on the values of $\phi$ (see for instance \cite{shabat1992}).
\end{proof}

From this point on we consider our problem in two variables $x = (x_{1},x_{2})$.
Define the $1$-form
\begin{equation}
\label{eq-1form-isolated}
\omega_{\varphi} = \omega_{\varphi}(x) = \frac{\theta_{\varphi}(x)}{h_{1}(x)^{\ell_{1}} \cdots h_{\gamma}(x)^{\ell_{\gamma}}} .
\end{equation}
It is a holomorphic $1$-form having an isolated singularity at $0 \in \C^{2}$.

Fix, for the moment, some $k_{0} \in \mathbb{N}$ and write
 $\mb{z}(x) = \mb{z}_{k_{0}}(x) + \mb{r}_{k_{0}+1}(x)$, where  $\mb{z}_{k_{0}}$ comprises all terms of degrees up to $k_{0}$ in the Taylor series
of $\mb{z}$ and $\mb{r}_{k_{0}+1}$, the remaining terms. For $t$ in a disc   $\mathbb{D}_{1+\varepsilon} \subset \C$ of radius $1 +\varepsilon$, for
some small $\varepsilon > 0$,
set $\mb{z}(x,t) =  \mb{z}_{k_{0}}(x) + (1-t) \mb{r} _{k_{0}+1}(x)$. It is an analytic function in the variables $(x,t)$ such that $\mb{z}(x,0) = \mb{z}(x)$ and $\mb{z}(x,1) = \mb{z}_{k_{0}}(x)$ is polynomial.
Observe that, for all $t$,  $\mb{z}(x,t)$ and $\mb{z}(x)$  are tangent to each other with order at least $k_{0}$, and that order can be made
as large as we wish.

Set
\begin{equation}
\label{eq-deformation-tangent}
\begin{array}{l}
\mb{F}(x,t)  = (F_{1}(x,t),\ldots, F_{\alpha}(x,t)) := \mathbf{\widehat{f}}(x,\mb{z}(x,t)) \smallskip \\
\mb{G}(x,t)  = (G_{1}(x,t),\ldots, G_{\beta}(x,t)) := \mathbf{\widehat{g}}(x,\mb{z}(x,t))
 \smallskip  \\
\mb{H}(x,t)  = (H_{1}(x,t),\ldots, H_{\gamma}(x,t)) := \mathbf{\widehat{h}}(x,\mb{z}(x,t)) \smallskip
\end{array}
\end{equation}
We have
 $\mb{F}(x,0) =  \mb{f}(x)$,  $\mb{G}(x,0) =  \mb{g}(x)$ and  $\mb{H}(x,0) =  \mb{h}(x)$, while
 $\mb{F}(x,1)$,  $\mb{G}(x,1)$ and  $\mb{H}(x,1)$ are algebraic, since each one of their entries is the composition of an algebraic function and a polynomial map.
Observe that, for $i=1,\ldots,\alpha$, for each fixed $t$, $F_{i}(x,t)$ is   tangent to $f_{i}(x)$, with an order that grows with $k_{0}$.
In particular this gives that $F_{i}(x,t)$ is a non-unity in $\C\{x\}$. The same holds for the functions $G_{i}(x,t)$ and $H_{i}(x,t)$.

Define
\smallskip
\[ F(x,t) = F_{1}(x,t)^{n_{1}} \cdots F_{\alpha}(x,t)^{n_{\alpha}} \ \ \ \text{and}  \ \ \
 G(x,t) = G_{1}(x,t)^{m_{1}} \cdots G_{\beta}(x,t)^{m_{\beta}}. \]
Let $R(x,t) =  F(x,t)/ G(x,t)$.
Note that $$R(x,0) =  F(x,0)/ G(x,0) = f(x)/g(x) = \varphi(x)$$ and $R(x,1) =   F(x,1)/ G(x,1) = \rho_{alg}(x)$ is an algebraic
meromorphic function.

We calculate the logarithmic derivative
\[ \Lambda_{R} = \Lambda_{R}(x,t)  = \frac{\dd R(x,t)}{R(x,t)} = \frac{\dd F(x,t)}{F(x,t)} -  \frac{\dd G(x,t)}{G(x,t)},\]
followed by the cancelation of its poles
\[ \Theta_{R} = \Theta_{R}(x,t) = F(x,t)^{\red}  G(x,t)^{\red}  \Lambda_{R}(x,t),  \]
where
$F^{\red} = F_{1} \cdots F_{\alpha}$ and
$G^{\red} = G_{1}\cdots G_{\beta}$.
Following Proposition \ref{prop-multiplicity}, the $1-$form $\Theta_{R}$ can be divided by the product  $H_{1}(x,t)^{\ell_{1}} \cdots H_{\gamma}(x,t)^{\ell_{\gamma}}$  since $\mb{H}$, together with $\mb{F}$ and $\mb{G}$, provide a solution of the system of equations \eqref{eq-system}.
Thus, we can define
\begin{equation}
\label{eq-omegaR}
 \Omega_{R} = \Omega_{R}(x,t) = \frac{\Theta_{R}}{H_{1}(x,t)^{\ell_{1}} \cdots H_{\gamma}(x,t)^{\ell_{\gamma}}},
\end{equation}
which is an integrable holomorphic $1$-form.
Denote by $\Omega_{R,t} = \Omega_{R,t}(x)$ the restriction of $\Omega_{R}$ to each $t$-constant plane.
At the level $t=0$,  $\Omega_{R,0} = \omega_{\varphi}$   has an isolated singularity at $0 \in \C^{2}$ and
defines the  foliation given by the levels of $\varphi$.
At the level $t=1$, $\Omega_{R,1}$ defines a foliation corresponding to the levels of the algebraic meromorphic function
$R(x,1) = \rho_{alg}(x)$.

Notice that
$t\mapsto\Omega_{R}\left(x,t\right)$ can be seen as an analytic
deformation of $\Omega_{R}\left(x,0\right)=\omega_{\varphi}$.
Up to this moment, we have fixed $k_{0}$ in
$\mb{z}(x,t) =  \mb{z}_{k_{0}}(x) + (1-t) \mb{r}_{k_{0}+1}(x)$.
By letting this $k_{0}$ grow, we can trivialize this deformation.
Let us make a statement of this fact:

\begin{prop}
\label{prop-localtrivial}
For sufficiently large $k_{0}$, the deformation $t\mapsto\Omega_{R}\left(x,t\right)$ is locally trivial
for any $t\in\left[0,1\right]$.
\end{prop}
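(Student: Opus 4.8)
The plan is to reduce Proposition \ref{prop-localtrivial} to the local triviality theorem for deformations of germs of holomorphic foliations from \cite{cerveaumattei1982}. The point is that $\Omega_{R,0}=\omega_{\varphi}$ is a germ of integrable holomorphic $1$-form with an \emph{isolated} singularity at $0\in\C^{2}$, and Cerveau--Mattei's result guarantees that, under a suitable hypothesis on the strict critical set, an analytic deformation $t\mapsto\Omega_{R,t}$ with $\Omega_{R,0}=\omega_{\varphi}$ is locally analytically trivial: there is a germ of analytic family of diffeomorphisms $\Phi_{t}\in\diff(\C^{2},0)$, with $\Phi_{0}=\mathrm{id}$, conjugating the foliation defined by $\Omega_{R,t}$ to that defined by $\omega_{\varphi}$. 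So the work consists of (i) checking that the deformation genuinely lands in the class to which that theorem applies, and (ii) arranging, by taking $k_{0}$ large, that the hypotheses hold uniformly on $[0,1]$.

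First I would record what is fixed along the deformation. Since, for every $t$, each entry $F_{i}(x,t)$, $G_{j}(x,t)$, $H_{r}(x,t)$ is tangent to $f_{i}(x)$, $g_{j}(x)$, $h_{r}(x)$ respectively to order at least $k_{0}$, for $k_{0}$ large these functions are non-units and their Weierstrass data (degrees, regularity orders in a generic coordinate) are the same as at $t=0$; this is exactly the stability statement of Proposition \ref{prop-ideal}. In particular the zero divisor of $F(x,t)G(x,t)$ and the multiplicities $n_{i},m_{j}$ are constant in $t$, so $\Omega_{R,t}$ is obtained from $\mathrm{d}R(x,t)/R(x,t)$ by cancelling poles and the factors $H_{r}^{\ell_{r}}$ in precisely the same combinatorial pattern as for $\omega_{\varphi}$. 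Next, and this is the key uniformity point: the singular set of $\Omega_{R,t}$ must stay zero-dimensional for all $t\in[0,1]$. By Proposition \ref{prop-multiplicity} the one-dimensional part of $\sing(\Theta_{R}(x,t))$ is governed by the factorization of $F(x,t)-c\,G(x,t)$, and the $H_{r}$'s were built to absorb exactly the components coming from the levels $c_{1},\dots,c_{\gamma}$; one then invokes Proposition \ref{prop-ideal}, applied to the relevant pairs of functions, to see that for $k_{0}$ large no \emph{new} one-dimensional component can appear (such a component would be a codimension-two degeneracy that a small, highly tangent perturbation cannot create). Hence $\Omega_{R,t}$ has an isolated singularity at $0$ for every $t\in[0,1]$, and by the same token its strict critical set $\mathrm{C}'$ is at most the point $0$.

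With this in hand, I would apply Cerveau--Mattei's trivialization theorem directly: a deformation, parametrized by $t$ in a neighbourhood of $[0,1]$, of a germ of foliation with the strict critical set reduced to a point is locally analytically trivial. Concretely one covers $[0,1]$ by finitely many intervals on each of which the trivialization holds and composes the resulting families of diffeomorphisms; since $[0,1]$ is compact and connected, finitely many steps suffice, and the composition is again an analytic family in $\diff(\C^{2},0)$ with value the identity at $t=0$. This yields the locally trivial deformation asserted in Proposition \ref{prop-localtrivial}, with the implicit bookkeeping that all of the above requires $k_{0}$ to exceed a bound depending only on $\varphi$ and on the data produced by Theorem \ref{teo-ploski}.

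The main obstacle is step (ii): controlling the deformation \emph{uniformly in $t$ over the whole segment $[0,1]$}, not just near a single parameter value. Pointwise in $t$ the hypotheses of the Cerveau--Mattei machinery are clearly satisfied for $k_{0}$ large, but one must ensure that the same $k_{0}$ works simultaneously for all $t$, that the singular locus does not jump dimension at some intermediate $t$, and that the local trivializations patch. This is where the continuity-of-Weierstrass-data argument (Proposition \ref{prop-ideal}) and the compactness of $[0,1]$ do the real work; everything else is a matter of assembling already-available results.
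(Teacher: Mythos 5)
Your overall architecture parallels the paper's proof: first ensure that $\Omega_{R,t}$ has an isolated singularity at $0\in\C^{2}$ for every $t\in[0,1]$ once $k_{0}$ is large, then invoke a Cerveau--Mattei trivialization statement at each parameter value. For the first step the paper simply applies Proposition \ref{prop-ideal} to the two coefficients of $\Omega_{R,t}$, which are high-order, uniformly small perturbations of those of $\omega_{\varphi}$; your detour through Proposition \ref{prop-multiplicity} and the absorption of the levels $c_{1},\dots,c_{\gamma}$ by the $H_{r}$'s reaches the same conclusion, if less directly.

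The genuine gap is in the key step. The theorem you invoke --- ``a deformation of a germ of foliation with the strict critical set reduced to a point is locally analytically trivial'' --- is false as stated: an analytic deformation keeping the singularity isolated need not be trivial. For instance $\Omega_{t}=\dd\left(y^{2}-x^{3}-t\,x^{2}\right)$ has an isolated singularity at the origin for every $t$, yet the cusp at $t=0$ is not equivalent to the Morse-type singularities at $t\neq 0$, so no local trivialization exists near $t=0$. What the paper actually uses is Lemma \ref{lem-triviality} (Cerveau--Mattei, Lem.\ 2.1, p.\ 149): for an integrable $\omega$ with isolated singularity there is an integer $k$, depending on $\omega$, such that any integrable deformation coinciding with $\omega$ modulo $(x)^{k}$ is trivializable. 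The indispensable hypothesis is this high-order coincidence, not a condition on the strict critical set; the ``$\mathrm{C}'(\varphi)$ at most a point'' hypothesis belongs to Cerveau--Mattei's finite-determination theorem for multiform functions, which is precisely the result that fails to apply in the purely meromorphic situation this paper addresses. Accordingly, the large $k_{0}$ has a twofold role: besides preserving the isolated singularity (the only role you give it), it guarantees that $\Omega_{R}(x,t)-\Omega_{R,t_{0}}(x)$ lies in $(x_{1},x_{2})^{k}$ with $k$ beyond the determinacy bound of Lemma \ref{lem-triviality} at each $t_{0}$. Since your proposal never states or verifies this tangency hypothesis, the trivialization step is unsupported as written; your concluding remark about covering $[0,1]$ by finitely many subintervals and composing is fine and is exactly how the paper proceeds after the proposition.
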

\begin{proof}
We start with the following lemma:
\begin{lem}
\label{lem-localtrivial}
For every sufficiently large $k_{0}$ and for any $t\in\left[0,1\right]$,
the $1$-form $\Omega_{R,t}$ has an isolated singularity at $\left(\mathbb{C}^{2},0\right)$.
\end{lem}
\begin{proof}
As remarked above, by increasing $k_{0}$, we can make the deformation
$\mb{z}(x,t)$ as tangent to $\mb{z}(x)=\mb{z}\left(x,0\right)$ as
we wish. It follows that the coefficients of $dx_{1}$ and $dx_{2}$
in $\Omega_{R,t}$ can be made as tangent and close as we wish to
those of $\Omega_{R,0}=\omega_{\varphi}$. Moreover, we can suppose
that, maybe after a linear change of coordinates of the variables
$x=(x_{1},x_{2})$, for every $t$, those coefficients are regular
with respect to $x_{2}$, with the same orders as the corresponding
coefficients of $\Omega_{R,0}=\omega_{\varphi}$. We can then apply
Proposition \ref{prop-ideal} to the ideal $\mathcal{I}$ generated by the coefficients
of $\Omega_{R,0}=\omega_{\varphi}$ and $\mathcal{I}^\prime $ generated by the coefficients
of $\Omega_{R,t}$ in order to obtain the lemma.
\end{proof}

We also need  the following result \cite[Lem. 2.1, p.149]{cerveaumattei1982} on the triviality of deformation
of foliations:
\begin{lem}
\label{lem-triviality}
Let $\omega$ be a germ of integrable holomorphic
$1$-form at $(\C^{n},0)$ with an isolated singularity at the origin.
There exists a positive integer $k$ such that, for every germ of
integrable $1$-form $\Omega$ at $(\C^{n+p},0)$ with the properties
\begin{enumerate}
\item $\Omega_{0}=\omega$ \ \ and
\item $\Omega$ coincides with $\omega$, at $(\C^{n+p},0)$, modulo the
ideal $(x)^{k}=(x_{1},\ldots,x_{n})^{k}$,
\end{enumerate}
there exists a germ of holomorphic diffeomorphism preserving horizontal
fibers $\Phi=\Phi(x,t)=(\phi(x,t),t)$ and a  germ of holomorphic
function $\Gamma=\Gamma(x,t)$ with $\Gamma(0,0)\neq 0$, both at $(\C^{n+p},0)$, such that
\[
\Phi_{*}\Omega=\Gamma\omega_{0}.
\]
\end{lem}
For any $t_{0}\in\left[0,1\right]$, $\Omega_{R,t_{0}}$ is an integrable
holomorphic $1-$form at $\left(\mathbb{C}^{2},0\right)$ with an
isolated singularity. Moreover, by construction, the difference
\[
\Omega_{R,t_{0}}-\Omega_{R}
\]
lies in the ideal $\left(x_1,x_2\right)^{k}$ with $k$ as big as necessary.
Therefore, Proposition \ref{prop-localtrivial} follows from Lemma \ref{lem-triviality}.
\end{proof}

In order to obtain Theorem \ref{theo-main}, it suffices to apply Proposition \ref{prop-localtrivial} a finite numbers
of times along the real interval $[0,1]$ and conclude that the germs of holomorphic foliations induced by $R(x,0) = \varphi(x)$ and by $R(x,1) = \rho_{alg}(x)$ at $(\C^{2},0)$
are holomorphically equivalent.
That is, there is a germ of biholomorphism $\Psi \in \text{Diff}(\C^{2},0)$ such that
$\varphi \circ \Psi$ and $\rho_{alg}$ define the same  germ of holomorphic foliation at $(\C^{2},0)$.
Finally, we conclude the proof of
Theorem \ref{theo-main} by showing that $\varphi \circ \Psi$  is an algebraic meromorphic function.
This is a consequence of  the lemma below.
 We recall that the field of   meromorphic first integrals
of a germ of holomorphic foliation $\F$ at $(\C^{n},0)$ has the form $\C(\kappa)$,
generated by  a meromorphic function $\kappa$ under  the left composition by   rational
functions in one variable \cite[Th.1.1, p.137]{cerveaumattei1982}. This meromorphic first integral $\kappa$ is said to be a \emph{minimal}.

\begin{lem} Let $\F$ be a germ of holomorphic foliation of codimension one at $(\C^{n},0)$ having
a meromorphic first integral. If the field of meromorphic first integrals of $\F$ contains an algebraic meromorphic function,
then all first integrals of $\F$ are algebraic.
\end{lem}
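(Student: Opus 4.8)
The plan is to reduce the statement to showing that a minimal meromorphic first integral of $\F$ is itself an algebraic meromorphic function, after which the conclusion is pure field theory. Recall from the discussion preceding the statement that the field of meromorphic first integrals of $\F$ equals $\C(\kappa)$ for a minimal meromorphic first integral $\kappa$ \cite[Th.1.1, p.137]{cerveaumattei1982}; thus every first integral of $\F$ is of the form $R(\kappa)$ with $R\in\C(T)$. Since the algebraic meromorphic functions form a field, namely the fraction field of $\C\langle x\rangle$, which contains $\C$, it is enough to prove that $\kappa$ belongs to it, that is, that $\kappa$ is algebraic over $\C(x)$: this forces $\C(\kappa)$, hence every first integral of $\F$, to consist of algebraic meromorphic functions.

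So let $\varphi$ be a non-constant algebraic meromorphic first integral of $\F$; this is how the hypothesis must be read, since the constants always belong to the field of first integrals. Because $\varphi\in\C(\kappa)$ we may write $\varphi=A(\kappa)/B(\kappa)$ with $A,B\in\C[T]$ and $B\neq 0$, whence $A(\kappa)-\varphi\,B(\kappa)=0$. As a polynomial in $T$ with coefficients in $\C(\varphi)$, the expression $A(T)-\varphi\,B(T)$ is nonzero: if $b\in\C$ denotes the leading coefficient of $B$, the corresponding coefficient of $A(T)-\varphi\,B(T)$ equals $a-\varphi b$ for some $a\in\C$, which is nonzero because $\varphi\notin\C$ (a non-constant meromorphic function is transcendental over the algebraically closed field $\C$). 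Hence $\kappa$ is a root of a nonzero polynomial over $\C(\varphi)$, so $\kappa$ is algebraic over $\C(\varphi)$.

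It remains to observe that $\varphi$ is algebraic over $\C(x)$. Indeed, being an algebraic meromorphic function, $\varphi=f/g$ with $f,g\in\C\langle x\rangle$, and each of $f$, $g$ --- hence their quotient $\varphi$ --- is algebraic over $\C(x)$ by the very definition of $\C\langle x\rangle$. Consequently $\C(x)(\varphi)$ is an algebraic extension of $\C(x)$, and since $\kappa$ is algebraic over $\C(\varphi)\subseteq\C(x)(\varphi)$, transitivity of algebraic extensions shows that $\kappa$ is algebraic over $\C(x)$. Equivalently, there is $P\in\C[x][t]$ with $P(x,\kappa)=0$, so $\kappa$ is an algebraic meromorphic function, and the lemma follows.

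I do not expect a genuine obstacle in this argument: it is elementary once the Cerveau--Mattei description of the field of first integrals is in hand. The only two points worth stating carefully are that the algebraic meromorphic functions really do form a field, so that membership of $\kappa$ propagates to all of $\C(\kappa)$, and the (harmless) reading of the hypothesis as providing a non-constant algebraic first integral --- without that, $R$ could be constant and the relation $\varphi=R(\kappa)$ would carry no information about $\kappa$.
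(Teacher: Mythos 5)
Your proof is correct and follows essentially the same route as the paper: both arguments use the Cerveau--Mattei minimal first integral $\kappa$ and elementary field theory to show that the relation $\rho_{alg}=\phi(\kappa)$ forces $\kappa$ itself to be algebraic, whence all of $\C(\kappa)$ is. The only difference is packaging --- the paper composes the annihilating polynomial with $\phi$ and takes the numerator, while you pass through algebraicity of $\kappa$ over $\C(\varphi)$ and transitivity --- and your explicit handling of the non-constancy of the algebraic first integral is a point the paper's version leaves implicit.
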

\begin{proof} Let $\rho_{alg}$ be an algebraic first integral  and let $\kappa$ be a minimal meromorphic first integral for $\F$.
Then there is a rational function $\phi(z)$ in one variable such that $\rho_{alg} = \phi \circ \kappa$.
Now, since $\rho_{alg}$ is algebraic, there exists a polynomial $P \in \C[x][t]$ such that
$P(\rho_{alg})=0$. Thus $P \circ \phi(\kappa)=0$. But $P \circ \phi$ is a rational function that annihilates $\kappa$.
Its numerator is a polynomial having the same property. We then conclude that $\kappa$ is algebraic, which evidently
implies that all elements in $\C(\kappa)$ are algebraic.
\end{proof}

\section{The real analytic case}
\label{section-realanalytic}

In this   section we shall   obtain the real variant of Theorem \ref{theo-main} from its complex version,  proved in Section \ref{sec-main-theorem}.
In order to fix  a terminology, we name \emph{real meromorphic functions} the elements of the ring of fractions of $\R\{x\}$, the ring of \emph{real analytic functions}
with real values, which are identified with convergent power series with real coefficients
in the variables $x =(x_{1},\ldots,x_{n}) \in \R^{n}$.
A real meromorphic function is \emph{algebraic} if its annihilated by a polynomial in $\R[x][t]$, where $t$ is a variable.
In order to make a distinction from the complex case, along this section,  real analytic objects will be denoted with a  circumflex sign.
%

The main tool we use is complexification, to be explained next.
Abusing notation and without risk of confusion, when referring to $\R\{x\}$ or  $\C\{x\}$, the same symbol $x$ will denote, respectively, real   or complex variables.
The \emph{complexification} of $\hat{f} \in \R\{x\}$ is the  power series    $f \in \C\{x\}$ with the same coefficients of $\hat{f}$.
If $\hat{\varphi} = \hat{f}/\hat{g}$ is real meromorphic, with  $\hat{f},\hat{g} \in \R\{x\}$, then its complexification is  $\varphi  = f /g$,
where $f$ and $g$ are, respectively, the complexifications of $\hat{f}$ and $\hat{g}$.
We will make use of the neologism \emph{decomplexification} and its variants when inverting this process.
Associate, with each $f \in \C\{x\}$,  the function $f^{*} \in \C\{x\}$ defined as $f^{*}(x) = \xbar{f(\bar{x})}$, that is,
if $f(x) = \sum_{\nu} a_{\nu} x^{\nu}$  in multi-index notation, then $f^{*}(x) = \sum_{\nu} \bar{a}_{\nu} x^{\nu}$.
Evidently, $f \in \C\{x\}$ is the complexification of an element of $\R\{x\}$ if and only if $f = f^{*}$.
In this case, we say that $f$ is \emph{real}.
For a complex meromorphic function $\varphi$,
we have the evident definition of $\varphi^{*}$ and an analogous property.
If $f \in \C\{x\}$, define
\[\re^{*}(f) = \frac{f + f^{*}}{2}   = \sum_{\nu} \re(a_{\nu}) x^{\nu} \ \ \ \text{and} \ \ \ \im^{*}(f) = \frac{f - f^{*}}{2i}
 = \sum_{\nu} \im(a_{\nu}) x^{\nu}.\]
 We have that $f = \re^{*}(f) + i \im^{*}(f)$, where $\re^{*}(f),  \im^{*}(f) \in \C\{x\}$ are series with real coefficients and, hence,
they decomplexify to $\R\{x\}$.
The complexification and decomplexification processes, as well as the $(*)$-operator, can be extended to analytic 1-forms and vector fields,
by simply considering their coefficients.

We will need the following definition:
\begin{ddef}  Let $\hat{\theta}$ be a germ of real analytic $1$-form at $(\R^{n},0)$. The \emph{algebraic singular set} of $\hat{\theta}$, denoted
$\sing_{alg}( \hat{\theta})$,  is the greatest common divisor of
its coefficients in $\R\{x\}$, determined up to the multiplication by a unity.
\end{ddef}
An  \emph{irreducible component} --- or \emph{component} for short ---
of $\sing_{alg}( \hat{\theta})$ is
some of its irreducible factors, determined up to a  unity in $\R\{x\}$. It has  an evident well defined \emph{multiplicity}.
When $n=2$,  we say that $\hat{\theta}$ has an
\emph{algebraically isolated singularity} at $0 \in \R^{2}$ if it has a singularity at this point
but   $\sing_{alg}( \hat{\theta})$ is trivial. This is equivalent to asking   its complexification $\theta$
to have an isolated singularity at $0 \in \C^{2}$.

Let us thus describe how to obtain Theorem \ref{theo-main} for $\K = \R$.
Let $\hat{\varphi} = \hat{f}/\hat{g}$ be a real meromorphic function, with  $\hat{f},\hat{g} \in \R\{x\}$ relatively prime.
Denote by $\varphi = f/g$, with $f,g \in \C\{x\}$, its corresponding complexification
and by $\hat{\theta}_{\hat{\varphi}}$ the real analytic $1$-form constructed following the steps leading to \eqref{eq-thetaH}, whose
complexification is $\theta_{\varphi}$.
The next result is a real version of Proposition \ref{prop-multiplicity}, concerning the components of the algebraic singular set:

\begin{prop}
\label{lem-real-irreducible}
An irreducible $\hat{h} \in \R\{x\}$ is a  component of $\sing_{alg}(\hat{\theta}_{\hat{\varphi}})$ of multiplicity $\ell$
if and only if $\ell$ is the largest integer for which one of the following conditions holds:
\begin{enumerate}
\item   there exist $c \in \R$ and $\hat{s} \in \R\{x\}$ such that
\[\hat{f}  - c \hat{g}  = \hat{h}^{\ell+1} \hat{s};\]
\item   there are $c \in \C \setminus \R$ and $\hat{\rho}_{1}, \hat{\rho}_{2}, \hat{\varsigma}_{1},\hat{\varsigma}_{2} \in  \R\{x\}$ such that
 $\hat{h} = \hat{\rho}_{1}^{2} + \hat{\rho}_{2}^{2}$ and
 \[ \hat{f}^{2} + |c|^{2} \hat{g}^{2}  - 2 \re(c)\hat{f}\hat{g} = (\hat{\rho}_{1}^{2} + \hat{\rho}_{2}^{2})^{\ell + 1}(\hat{\varsigma}_{1}^{2} + \hat{\varsigma}_{2}^{2}).\]
\end{enumerate}
\end{prop}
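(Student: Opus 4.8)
The plan is to reduce everything to the complex statement already proved, Proposition~\ref{prop-multiplicity}, by systematically complexifying. The key preliminary step is a dictionary between $\sing_{alg}(\hat\theta_{\hat\varphi})$ and the codimension-one part of $\sing(\theta_\varphi)$. Since $\theta_\varphi$ is the complexification of $\hat\theta_{\hat\varphi}$, the greatest common divisor $\hat d$ of the coefficients of $\hat\theta_{\hat\varphi}$ in $\R\{x\}$ complexifies, up to a unit, to the gcd $d$ of the coefficients of $\theta_\varphi$ in $\C\{x\}$: indeed $d^{*}$ is again such a gcd, so $d=d^{*}$ up to a unit and hence decomplexifies, and the standard remark that a factorization $a=dq$ in $\C\{x\}$ with $a,d$ real forces $q$ real shows the decomplexification of $d$ is a gcd over $\R\{x\}$. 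Next I would examine how an irreducible $\hat h\in\R\{x\}$ behaves under complexification: writing $h\in\C\{x\}$ for its complexification, the involution $*$ permutes the prime factors of the real element $h$, and any $*$-fixed prime factor would decomplexify to a proper real factor of $\hat h$; this yields a dichotomy --- \emph{either} $h$ is irreducible over $\C$ (Case~1), \emph{or} $h=u\,p\,p^{*}$ for a unit $u$ and a conjugate pair $p\neq p^{*}$ of distinct $\C\{x\}$-primes (Case~2). In Case~2, writing $p=\rho_{1}+i\rho_{2}$ with $\rho_{1}=\re^{*}(p)$, $\rho_{2}=\im^{*}(p)$ gives $pp^{*}=\rho_{1}^{2}+\rho_{2}^{2}$, and after absorbing the real unit $u$ (a square up to sign, by the usual square-root argument for units positive at $0$, replacing $\hat h$ by $-\hat h$ if necessary) one may take the representative of the component to be exactly $\hat h=\hat\rho_{1}^{2}+\hat\rho_{2}^{2}$. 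Moreover Case~1 is exactly the case where the level of $\{h=0\}$ in $\varphi$ is real, since conjugation sends the level $c$ of the (connected, real) curve $\{h=0\}$ to $\bar c$.

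Then I would prove the ``only if'' direction. Suppose $\hat h$ is a component of $\sing_{alg}(\hat\theta_{\hat\varphi})$ of multiplicity $\ell\geq 1$. In Case~1, $h$ is, up to a unit, some component $h_{i}$ of $\sing(\theta_\varphi)$ of multiplicity $\ell_{i}=\ell$ and with real level $c_{i}$; Proposition~\ref{prop-multiplicity} gives $f-c_{i}g=h^{\ell+1}s$ with $h\nmid s$, and since $f-c_{i}g$ and $h^{\ell+1}$ are real, so is $s$, so decomplexifying yields condition~(1). Condition~(2) cannot hold here, as $\hat h=\hat\rho_{1}^{2}+\hat\rho_{2}^{2}$ would force $h$ to be reducible over $\C$ or to be a unit. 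In Case~2, put $p=h_{i}$, $p^{*}=h_{j}$ (with $\ell_{i}=\ell_{j}=\ell$ by $*$-symmetry of $\sing(\theta_\varphi)$), and let $c_{i}$ be the level of $p$, so $\bar c_{i}$ is the level of $p^{*}$. If $c_{i}\in\R$, then $f-c_{i}g$ is divisible by $p^{\ell+1}$ and, applying $*$, by $(p^{*})^{\ell+1}$, hence by $h^{\ell+1}$ with real cofactor, giving condition~(1). If $c_{i}\notin\R$, I would expand
\[
(f-c_{i}g)(f-\bar c_{i}g)=(\text{unit})\cdot h^{\ell+1}\cdot A A^{*},
\]
where $A=(f-c_{i}g)/p^{\ell+1}$ is coprime to $p$ and $p^{*}$ (using that $\{p=0\}$ lies in the single level $c_{i}\neq\bar c_{i}$, so $p\nmid f-\bar c_{i}g$), and then decomplexify, writing $AA^{*}$ and $h$ as sums of two squares, to obtain condition~(2). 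In each case the value $\ell$ produced is maximal among all $\ell'$ for which (1) or (2) holds: a relation with $\ell'>\ell$ would, after complexifying, force $p$ (resp.\ $h$) to divide some $f-c'g$ to order $>\ell+1$, contradicting the exact multiplicity given by Proposition~\ref{prop-multiplicity}; and the ``other'' alternative is excluded because it would place $\{p=0\}$ in a level whose reality (real versus non-real) is incompatible with that of $c_{i}$.

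For the ``if'' direction I would reverse this. Assume $\ell$ is the largest integer for which (1) or (2) holds for $\hat h$. Maximality first forces the relevant cofactor ($\hat s$ in case~(1), or $\hat\varsigma_{1}^{2}+\hat\varsigma_{2}^{2}$ in case~(2)) to be coprime to $\hat h$, for otherwise one could increase $\ell$. Complexifying the relation then gives, in Case~1, $v_{h}(f-cg)=\ell+1$ exactly with $c$ real; and in the split case it gives $v_{p}(f-cg)+v_{p}(f-\bar c g)=\ell+1$ where each summand is still controlled by the (single) level of $\{p=0\}$, so in fact one of them is $\ell+1$ and the other is $0$. In either case Proposition~\ref{prop-multiplicity} identifies $h$ (resp.\ $p$ and $p^{*}$) as a component of $\sing(\theta_\varphi)$ of multiplicity $\ell$, and decomplexifying shows $\hat h$ is a component of $\sing_{alg}(\hat\theta_{\hat\varphi})$ of multiplicity $\ell$. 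The degenerate value $\ell=0$ corresponds to $\hat h$ not being a component at all, and is covered by the same computation reading ``multiplicity $0$'' as ``not a factor of the gcd''.

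The main obstacle is not the analytic geometry --- all of that is packaged in Proposition~\ref{prop-multiplicity} together with the connectedness of an irreducible curve --- but the careful bookkeeping of the real-to-complex dictionary: establishing the irreducible-versus-splitting dichotomy for $\hat h$, matching its two branches precisely to alternatives (1) and (2), and tracking units and signs so that the conjugate-pair curves really do appear as honest sums of two squares $\hat\rho_{1}^{2}+\hat\rho_{2}^{2}$ and $\hat\varsigma_{1}^{2}+\hat\varsigma_{2}^{2}$ as in the statement. A secondary point needing care is the verification that $\{p=0\}$ lies in a single level of $\varphi$, and hence cannot be divisible simultaneously by $f-cg$ and $f-\bar c g$ when $c\notin\R$, which is what cleanly separates conditions (1) and (2).
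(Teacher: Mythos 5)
Your proposal is correct and follows essentially the same route as the paper: complexify, distinguish whether the complexification $h$ stays irreducible or splits as a conjugate pair $\rho\,\rho^{*}$, invoke Proposition \ref{prop-multiplicity}, obtain condition (1) when the level is real and condition (2) by multiplying the two conjugate relations, then reverse the argument for the converse; your additional bookkeeping (the gcd dictionary under complexification, absorption of the real unit, and the maximality/coprimality discussion) only makes explicit what the paper leaves implicit. The one small slip is the aside that Case 1 is \emph{exactly} the case of a real level --- the split case $h=\rho\rho^{*}$ can also lie in a real level, as your own subsequent treatment of $c_{i}\in\R$ inside Case 2 (and the paper's) shows --- but nothing in your argument depends on that remark.
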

\begin{proof}
Suppose that an irreducible  $\hat{h} \in \R\{x\}$ defines a component of $\sing_{alg}(\hat{\theta}_{\hat{\varphi}})$ of multiplicity $\ell$. It    can   be of two types,
depending whether its complexification $h$  is irreducible or reducible in $\C\{x\}$.
In the irreducible case, $h$ is the equation of a component of $\sing(\theta_{\varphi})$ of multiplicity $\ell$ and, as in Proposition \ref{prop-multiplicity}, there is $c \in \R$ and $s \in \C\{x\}$ such that
\[f  - c g  = h^{\ell+1} s, \]
with $s$ non-divisible by $h$.
The left side  is invariant under the $(*)$-operator,  so is the right side  and, decomplexifying  this expression,
we obtain  (1)  in the proposition.

In the reducible case, it is easy to see that $h = \rho \rho^{*}$, where $\rho \in \C\{x\}$ irreducible. In this situation, both $\rho$ and $\rho^{*}$ have the same multiplicity
$\ell$ as the equation of an irreducible component  of $\sing(\theta_{\varphi})$, corresponding to levels $c \in \C$ and $\bar{c} \in \C$.
They satisfy the  equations,  one being the $(*)$ of the other, given by
\begin{equation}
\label{eq-algebraic_comoponent}
 f  - c g  = \rho^{\ell+1} \varsigma \ \ \ \text{and}  \ \ \  f  - \bar{c} g  = (\rho^{*})^{\ell+1} \varsigma^{*},
\end{equation}
where $\varsigma \in \C\{x\}$ does not have $\rho$ as a factor.
If $c \in \R$, then $\rho^{*}$ is a factor of $\varsigma$ in the first equation, of multiplicity $\ell+1$.
We would then have
\[ f  - c g  = \rho^{\ell+1} (\rho^{*})^{\ell+1} s = h^{\ell+1} s,\]
for some $s  \in \C\{x\}$ such that $s = s^{*}$.
This decomplexifies, also giving  (1) in the proposition's  statement.

Now, when $c \in \C \setminus \R$, the product of the equations in \eqref{eq-algebraic_comoponent}
is
\begin{equation}
\label{eq-decomplexification2}
f^{2} + c \bar{c} g^{2}  - (c + \bar{c})fg = (\rho \rho^{*})^{\ell+1} \varsigma \varsigma^{*}.
\end{equation}
Writing the decompositions    $\rho = \re^{*}(\rho) + i  \im^{*}(\rho) = \rho_{1}  + i  \rho_{2} $ and
$\varsigma =  \re^{*}(\varsigma) + i  \im^{*}(\varsigma) = \varsigma_{1} + i  \varsigma_{2}$, where   $\rho_{1},\rho_{2}, \varsigma_{1}, \varsigma_{2}$ are
the complexifications of, respectively, $\hat{\rho}_{1},\hat{\rho}_{2}, \hat{\varsigma}_{1}, \hat{\varsigma}_{2} \in \R\{x\}$,
we find that  equation \eqref{eq-decomplexification2} decomplexifies as item  (2)  in the statement.

The arguments above can be easily reversed, using the equivalence stated in Proposition \ref{prop-multiplicity},
in order to show  that conditions (1) and (2) determine a component of multiplicity $\ell$ of  $\sing_{alg}(\hat{\theta}_{\hat{\varphi}})$.
\end{proof}

Proposition \ref{lem-real-irreducible} is a necessary refinement, for the real case, of Proposition \ref{prop-multiplicity}. Indeed,   the real analytic version of the Approximation Theorem \ref{teo-ploski}  will provide a real deformation of the real form $\hat{\theta}_{\hat{\varphi}}$ which must preserve the topological structure of the \emph{complex} singular set of the complexified $1$-form ${\theta}_{{\varphi}}$.

Now we  transpose to the real case the same recipe of Section \ref{sec-main-theorem}.
Considering the equations of irreducible components  of $\sing_{alg}(\hat{\theta}_{\hat{\varphi}})$ and the
 decompositions in irreducible factors of $\hat{f}$ and $\hat{g}$ in $\R\{x\}$,  we produce a system
of real polynomial equations, similar to \eqref{eq-system}, taking into account the expressions of items  (1)  and  (2)
in Proposition \ref{lem-real-irreducible}.
We  then   apply the real analytic version of   Theorem \ref{teo-ploski}  in order to have real algebraic solutions,
as in \eqref{eq-ploski-solution},
depending on extra variables $z=(z_{1},\ldots,z_{\sigma}) \in \R^{\sigma}$, from which
we can produce a deformation of the initial solution, as tangent to the identity as we wish, as in \eqref{eq-deformation-tangent}.
We then obtain a real  meromorphic function $\hat{R}(x,t)$, where $t \in [0,1]$,
such that $\hat{R}(x,0) = \hat{\varphi}$ and $\hat{R}(x,1) = \hat{\varphi_{alg}}$ is a real algebraic meromorphic function.
The next step is to produce a real analytic $1$-form $\hat{\omega}_{\hat{\varphi}}$ as in \eqref{eq-1form-isolated}, which, by construction,
has a trivial algebraic singular set, and the corresponding real analytic deformation
$\hat{\Omega}_{\hat{R}}$, as in \eqref{eq-omegaR}.

Next
we take the complexification $R = R(x,t)$ of $\hat{R}$,  where  $(x,t)$ are now complex variables,
and then produce $\Omega_{R} = \Omega_{R}(x,t)$ as in \eqref{eq-omegaR}, which happens to be
the complexification of $\hat{\Omega}_{\hat{R}}$. Working now in dimension $n=2$,
we have that $\Omega_{R,0}  = \omega_{\varphi} $ has an isolated singularity at $0 \in \C^{2}$ and,  since
 $\Omega_{R}$ can be made as tangent to $\omega_{\varphi}$ as we wish,
by Lemma \ref{lem-localtrivial},  we can assume that   $\Omega_{R,t}$ has an isolated singularity at $0 \in \C^{2}$
for every $t \in [0,1]$.

The final step is to apply Lemma \ref{lem-triviality}  in order to obtain
the local triviality  in Proposition \ref{prop-localtrivial}. However, we have to assure
that it   can be achieved by means of a   germ of real diffeomorphism, namely
by an element in ${\rm Diff}(\C^{2},0)$ whose entries are invariant by the $(*)$-operator.
This actually follows from the Lemma's proof in \cite[p.149]{cerveaumattei1982}, which
is  essentially based on
 finding,  for $k=1,\ldots,p$,  vector fields of the form $v_{k} = \sum_{j=1}^{n} p_{kj} \frac{\partial}{\partial x_{j}} +  \frac{\partial}{\partial t_{k}}$,
 where $p_{kj} \in \C\{x,t\}$,
 all of them tangent to $\Omega$,
 that is $i_{v_{k}} \Omega  = 0$.
However, if $\Omega$ is real, i.e. $\Omega^{*} = \Omega$,
 we can assume that each $v_{k}$ is real, by eventually
 replacing it by $\re^{*}(v_{k}) = (v_{k} + v_{k}^{*})/2$ or by $\im^{*}(v_{k}) = (v_{k} - v_{k}^{*})/2i$.
The integration of these vector fields
$v_{k}$ provide  the real diffeomorphism we seek.
In order to conclude, we apply this to the real $1$-form
$\Omega = \Omega_{R}$.

\bibliographystyle{plain}
\bibliography{references}

\def\cprime{$'$}
\begin{thebibliography}{10}

\bibitem{Artin1968}
M.~Artin.
\newblock On the solutions of analytic equations.
\newblock {\em Invent. Math.}, 5:277--291, 1968.

\bibitem{Artin1969}
M.~Artin.
\newblock Algebraic approximation of structures over complete local rings.
\newblock {\em Inst. Hautes \'{E}tudes Sci. Publ. Math.}, (36):23--58, 1969.

\bibitem{bilski2017}
M.~Bilski, A.~Parusi\'{n}ski, and G.~Rond.
\newblock Local topological algebraicity of analytic function germs.
\newblock {\em J. Algebraic Geom.}, 26(1):177--197, 2017.

\bibitem{Bochnak}
J.~Bochnak and W.~Kucharz.
\newblock Local algebraicity of analytic sets.
\newblock {\em J. Reine Angew. Math.}, 352:1--14, 1984.

\bibitem{CalsamigliaSad2015}
G.~Calsamiglia and P.~Sad.
\newblock Extension of germs of holomorphic foliations.
\newblock {\em Ann. Fac. Sci. Toulouse, Math. (6)}, 24(3):543--561, 2015.

\bibitem{Casale2013}
G.~Casale.
\newblock Simple meromorphic functions are algebraic.
\newblock {\em Bull. Braz. Math. Soc. (N.S.)}, 44(2):309--319, 2013.

\bibitem{cerveaumattei1982}
D.~Cerveau and J.-F. Mattei.
\newblock {\em Formes int\'egrables holomorphes singuli\`eres}, volume~97 of
  {\em Ast\'erisque}.
\newblock Soci\'et\'e Math\'ematique de France, Paris, 1982.

\bibitem{dulac1909}
H.~Dulac.
\newblock Sur les points singuliers d'une \'{e}quation diff\'{e}rentielle.
\newblock {\em Ann. Fac. Sci. Toulouse Sci. Math. Sci. Phys. (3)}, 1:329--379,
  1909.

\bibitem{GenzmerTeyssier2010}
Y.~Genzmer and L.~Teyssier.
\newblock Existence of non-algebraic singularities of differential equation.
\newblock {\em J. Differential Equations}, 248(5):1256--1267, 2010.

\bibitem{MatherYau1981}
J.~Mather and S.~Yau.
\newblock Criterion for biholomorphic equivalence of isolated hypersurface
  singularities.
\newblock {\em Proc. Natl. Acad. Sci. USA}, 78:5946--5947, 1981.

\bibitem{MatherYau1982}
J.~Mather and S.~Yau.
\newblock Classification of isolated hypersurface singularities by their moduli
  algebras.
\newblock {\em Invent. Math.}, 69(2):243--251, 1982.

\bibitem{Ploski1974}
A.~P\l~oski.
\newblock Note on a theorem of {M}. {A}rtin.
\newblock {\em Bull. Acad. Polon. Sci. S\'{e}r. Sci. Math. Astronom. Phys.},
  22:1107--1109, 1974.

\bibitem{Rond2018}
G.~Rond.
\newblock Artin approximation.
\newblock {\em J. Singul.}, 17:108--192, 2018.

\bibitem{shabat1992}
B.~Shabat.
\newblock {\em Introduction to complex analysis. {P}art {II}}, volume 110 of
  {\em Translations of Mathematical Monographs}.
\newblock American Mathematical Society, Providence, RI, 1992.

\end{thebibliography}

\end{document}